\newtheorem{lemma}{Lemma}[section]
\newtheorem{theorem}[lemma]{Theorem}
\newtheorem{proposition}[lemma]{Proposition}
\newtheorem{conj}[lemma]{Conjecture}
\newtheorem{claim*}{Claim}
\theoremstyle{definition}
\newtheorem{remark}[lemma]{Remark}
\newcommand{\PP}{{\mathbb P}}
\newcommand{\C}{{\mathbb C}}
\newcommand{\F}{{\mathbb F}}
\newcommand{\Q}{{\mathbb Q}}
\newcommand{\Z}{{\mathbb Z}}
\newcommand{\NN}{{\mathbb N}}
\newcommand{\kk}{{\mathbf k}}
\DeclareMathOperator{\im}{im}
\DeclareMathOperator{\Gal}{Gal}
\DeclareMathOperator{\Jac}{Jac}
\DeclareMathOperator{\SL}{SL}
\DeclareMathOperator{\GL}{GL}
\DeclareMathOperator{\gon}{gon}
\newcommand{\into}{\hookrightarrow}
\numberwithin{equation}{section}
\numberwithin{table}{section}
\title{Isolated Points on $X_1(\ell^n)$ with rational $j$-invariant}
\author{\"Ozlem Ejder}
\address{Bo\u{g}azi\c{c}i  University, Department of Mathematics, Istanbul, Turkey}
\email{ozlem.ejder@boun.edu.tr}
\urladdr{https://sites.google.com/site/ozheidi/Home}
\begin{document}

\begin{abstract}
Let $\ell$ be a prime and let $n\geq 1$. In this note we show that if there is a non-cuspidal, non-CM isolated point $x$ with a rational $j$-invariant on the modular curve $X_1(\ell^n)$, then $\ell=37$ and the $j$-invariant of $x$ is either $7\cdot11^3$ or $-7.137^3\cdot2083^3$. The reverse implication holds
for the first j-invariant but it is currently unknown whether or not it holds for the second.
\end{abstract}

\maketitle

\section{Introduction}

Let $C$ be a curve over a field $k$. Frey \cite{Frey} observed that Faltings's theorem implies that if $C$ has infinitely many degree $d$ points, then either there is a function $C \to \PP^1$ of degree $d$ or that the image of the map $\phi_d: C^{(d)} \to \Jac(C)$ is a union of translates of a positive rank subabelian variety. We call a closed point on a curve $C$ isolated if it is neither a member of a family parametrized by $\PP^1$ or by a positive rank subabelian variety of the Jacobian of $C$. See the next page for a more precise definition. Motivated by the classification of torsion subgroups of elliptic curves over various number fields, we study the isolated points on $X_1(n)$. In \cite[Corollary 1.7]{BELOV},  it is proven that there are only finitely many  rational $j$-invariants giving rise to isolated points assuming Serre's uniformity conjecture (originally a question of Serre \cite{Serre}, formalized as a conjecture by Zywina \cite[Conj 1.12]{Zywina-possibleimages} and Sutherland \cite[Conj 1.1]{Sutherland-Computingimages}). 

\begin{conj}[Uniformity conjecture]
For all non-CM elliptic curves $E/\Q$, the mod-$\ell$ Galois representation of
$E$ is surjective for all $\ell > 37$. 
\end{conj}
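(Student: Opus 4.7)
The statement at hand is the uniformity conjecture itself, which is famously open; accordingly any ``proof proposal'' can only be a description of the standard strategy together with the partial results that are currently known. The plan is to fix a non-CM elliptic curve $E/\Q$ and a prime $\ell > 37$, and argue by contradiction, assuming that $\rhobar_{E,\ell}\colon G_\Q \to \GL_2(\F_\ell)$ has proper image. Since $\det\rhobar_{E,\ell}$ is the mod $\ell$ cyclotomic character and hence surjective, the image lies in some proper maximal subgroup $H$ of $\GL_2(\F_\ell)$ with full determinant. By Dickson's classification, $H$ is contained, up to conjugation, in one of the following: a Borel subgroup, the normalizer of a split Cartan subgroup, the normalizer of a non-split Cartan subgroup, or the preimage in $\GL_2(\F_\ell)$ of an exceptional subgroup $A_4$, $S_4$, or $A_5$ of $\PGL_2(\F_\ell)$.

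The plan is then to dispatch each of the four cases by translating it into a statement about rational points on a modular curve. The Borel case is equivalent to the existence of a $\Q$-rational cyclic $\ell$-isogeny from $E$, which is ruled out for $\ell > 37$ in the non-CM setting by Mazur's theorem on rational isogenies together with refinements due to Kenku and Momose. The split Cartan normalizer case amounts to showing that $X_{\mathrm{sp}}^+(\ell)(\Q)$ consists only of cusps and CM points, which is a theorem of Bilu--Parent and Bilu--Parent--Rebolledo for all $\ell \geq 11$, $\ell \neq 13$. The exceptional subgroups are handled by the classical arguments of Serre: local analysis at $\ell$, using the action of inertia through fundamental characters and bounding ramification indices, forces $\ell$ to be small.

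The main obstacle, and the entire reason the conjecture remains open, is the normalizer of a non-split Cartan case. Here one is reduced to determining $X_{\mathrm{ns}}^+(\ell)(\Q)$, whose Jacobian typically has Mordell--Weil rank equal to the genus, so classical Chabauty--Coleman does not apply. Quadratic Chabauty and related non-abelian methods (Balakrishnan--Dogra--M\"uller--Tuitman--Vonk) have resolved small primes (most notably $\ell = 13$), and there are partial results for further primes, but no uniform technique is currently known that handles \emph{all} $\ell > 37$ at once. A successful attack would either require a new arithmetic-geometric input controlling rational points on $X_{\mathrm{ns}}^+(\ell)$ uniformly in $\ell$, or a prime-by-prime verification extending the explicit methods above to every remaining prime; the latter is in principle tractable but has not been carried out.
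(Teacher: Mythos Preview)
Your proposal is appropriate: the statement is the Uniformity Conjecture, which the paper records as an open conjecture and does \emph{not} prove, so there is no proof in the paper to compare against. You correctly identify this and give an accurate summary of the standard Dickson-classification strategy, the cases that are settled (Borel by Mazur, split Cartan normalizer by Bilu--Parent and Bilu--Parent--Rebolledo, exceptional by Serre), and the open non-split Cartan normalizer case; this matches the background the paper itself sketches in \S2.4.
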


 In this short note, we prove unconditionally that there are finitely many isolated rational $j$-invariants on $X_1(\ell^n)$ for any prime $\ell >7$.  
  \begin{theorem}\label{thm:primepower}
      Let $\ell$ be a prime greater than $7$ and let $n$ be a positive integer. If $X_1(\ell^n)$ has a non-CM, non-cuspidal isolated point with a rational $j$-invariant, then $\ell=37$ and the $j$-invariant is either $7\cdot11^3$ or $-7\cdot137^3\cdot2083^3$.
    \end{theorem}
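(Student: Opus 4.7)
The plan is to reduce the theorem to a case analysis on the possible mod-$\ell$ images of the Galois representation $\rho_{E,\ell}$ for the non-CM elliptic curve $E/\Q$ associated to the putative isolated point. The key input is the criterion underlying \cite{BELOV}: a non-cuspidal, non-CM point $x \in X_1(\ell^n)$ with $j(x) = j(E) \in \Q$ is isolated only if the Galois orbit of a point of order $\ell^n$ on $E$ is small enough that the associated divisor does not fall into a $\PP^1$-family or into a translate of a positive-rank abelian subvariety of $\Jac(X_1(\ell^n))$ under the map $\phi_d$. If $\rho_{E,\ell}$ is surjective onto $\GL_2(\F_\ell)$, then for $\ell > 7$ the $\ell$-adic image of $\rho_E$ is all of $\GL_2(\Z_\ell)$ by the standard lifting lemma, the orbit of a point of order $\ell^n$ has size on the order of $\ell^{2n}$, and this is too large to permit isolation on $X_1(\ell^n)$. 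Hence $\rho_{E,\ell}$ must land in a proper subgroup of $\GL_2(\F_\ell)$.

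I would next invoke the classification of maximal proper subgroups of $\GL_2(\F_\ell)$ that can contain the mod-$\ell$ image of a non-CM $E/\Q$ with $\ell > 7$: by the combined works of Mazur, Bilu--Parent--Rebolledo, and Serre, the only possibilities are Borel subgroups (forcing $\ell \in \{11,13,17,19,37,43,67,163\}$ by Mazur's isogeny theorem) and normalizers of non-split Cartan subgroups. In the Borel case, I would go through each prime $\ell$ and each non-CM rational $j$-invariant on $X_0(\ell)$, compute the orbit size of a point of order $\ell^n$ on the associated $E$ by tracking the full $\ell$-adic image of $\rho_E$, and rule out isolation either by showing this orbit exceeds the relevant gonality of $X_1(\ell^n)$ or by exhibiting an explicit $\PP^1$-family or positive-rank abelian subvariety of $\Jac(X_1(\ell^n))$ containing the corresponding divisor. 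This is designed to eliminate every case except $\ell = 37$, where the two non-CM rational points on $X_0(37)$ are precisely those with $j = 7 \cdot 11^3$ and $j = -7 \cdot 137^3 \cdot 2083^3$.

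The non-split Cartan case can be dispatched using the known rational points on $X_{\operatorname{ns}}^+(\ell)$, determined for small $\ell$ by Zywina, by Baran, and by Balakrishnan--Dogra--M\"uller--Tuitman--Vonk, which in the relevant range produce only CM $j$-invariants (excluded) or reproduce the two 37-candidates above. The main obstacle will be the Borel sweep: ruling out isolation for each candidate $j$-invariant other than $7 \cdot 11^3$ and $-7 \cdot 137^3 \cdot 2083^3$ requires precise orbit-size computations not just mod-$\ell$ but $\ell$-adically, and in some cases explicit constructions of maps $X_1(\ell^n) \to \PP^1$ or of subabelian varieties of $\Jac(X_1(\ell^n))$ --- a considerably more delicate task than in the surjective case where a simple Galois-orbit count suffices.
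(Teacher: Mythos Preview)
Your proposal has a genuine gap in the non-split Cartan case that would make the argument conditional on Serre's uniformity conjecture, which is exactly what the theorem is meant to avoid. You write that this case ``can be dispatched using the known rational points on $X_{\mathrm{ns}}^+(\ell)$, determined for small $\ell$ \ldots\ which in the relevant range produce only CM $j$-invariants.'' But the theorem concerns \emph{all} primes $\ell>7$, and for general $\ell$ the rational points on $X_{\mathrm{ns}}^+(\ell)$ are not known; determining them is precisely the open content of the uniformity conjecture. The paper's proof sidesteps this entirely: it shows (Proposition~\ref{supersingular}) that if $G_{E,\ell}\subset C_{ns}^+(\ell)$ with $\ell>7$ then $E$ has potential supersingular reduction at $\ell$, and then uses Lozano-Robledo's ramification result to deduce that $[\Q(R):\Q([\ell]R)]=\ell^2$ for any $R$ of order $\ell^n$. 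This forces $\deg(x)=\deg(\pi)\cdot\deg(\pi(x))$ for $\pi\colon X_1(\ell^n)\to X_1(\ell)$, so $x$ pushes down to an isolated point on $X_1(\ell)$; there the degree is at least $(\ell^2-1)/12$, which exceeds the genus, and Riemann--Roch finishes it. No knowledge of rational points on $X_{\mathrm{ns}}^+(\ell)$ is needed.

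A second structural issue is that you propose to work directly on $X_1(\ell^n)$ for each $n$, whereas the paper's core mechanism is the level-lowering criterion (Theorem~\ref{levellowering}): once one checks $\deg(x)=\deg(\pi)\cdot\deg(\pi(x))$, isolation descends to $X_1(\ell)$, and the problem becomes finite. In the surjective and Borel cases this degree condition follows from the fact that the $\ell$-adic image is the full preimage of the mod-$\ell$ image (Serre's lifting lemma in the surjective case, Greenberg's theorems in the Borel case). Your plan to ``exhibit an explicit $\PP^1$-family or positive-rank abelian subvariety of $\Jac(X_1(\ell^n))$'' for each $n$ would be infeasible in practice. Finally, a minor point: your Borel list $\{11,13,17,19,37,43,67,163\}$ includes the CM-only primes $19,43,67,163$; for non-CM $E/\Q$ with $\ell>7$ the Borel case forces $\ell\in\{11,13,17,37\}$.
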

    \begin{remark}
The first $j$-invariant $7\cdot11^3$ gives rise to an isolated point on $X_1(37)$ by \cite[Proposition 8.4]{BELOV}. However, we currently do not know whether the second $j$-invariant $-7\cdot137^3\cdot2083^3$ gives rise to an isolated point on $X_1(37)$ or not. We also note that the case $\ell=2$ was studied in \cite[Theorem~8.5]{BELOV} (for sporadic points) and it is an open problem to determine the isolated rational $j$-invariants on $X_1(\ell^n)$ for $\ell=3,5,7$ and $n\geq 2$.
\end{remark}

Another unconditional result related to this problem is given in \cite{BRRWodd} for isolated points of odd degree with rational $j$-invariant on $X_1(n)$.
Also see \cite[Theorem~2.3]{smith2020ramification} for a result on sporadic points on $X_1(\ell^n)$ corresponding to elliptic curves with supersingular reduction at $\ell$.

\section{Background and Notation}
 By curve we mean a projective nonsingular $1$-dimensional scheme over a field.  For a curve $C$ over a number field $k$, we use $\gon_k(C)$ to denote the $k$-gonality of $C$, which is the minimum degree of a dominant morphism $C \rightarrow \mathbb{P}^1_k$. If $x$ is a closed point of $C$, we denote the residue field of $x$ by $\kk(x)$ and define the degree of $x$ to be the degree of the residue field $\kk(x)$ over $k$. 
 
 If $E$ is an elliptic curve defined over a number field $k$ and $P \in E(k)$, then $k(P)$ denotes the field extension of $k$ generated by the $x-$ and $y-$coordinates of $P$.

  We use $E$ to denote an elliptic curve, i.e., a curve of genus $1$ with a specified rational point $O$. Throughout we will consider only elliptic curves defined over number fields. We say that an elliptic curve $E$ over a field $k$ has complex multiplication, or {CM}, if the geometric endomorphism ring is strictly larger than $\mathbb{Z}$.

 \subsection{Galois Representations}
Let $k$ be a number field. Throughout, we denote the absolute Galois group of $k$ by $G_{k}$. We use $\ell$ to denote an odd prime number. 
Let $E/k$ be an elliptic curve defined over the number field $k$. Fixing a basis for the $\ell$-adic Tate module of $E$, we obtain the representation given by  
\[ \rho_{E,\ell^{\infty}}: G_{k} \to \GL_2(\Z_{\ell}),
\]
 where $\Z_{\ell}$ denotes the ring of $\ell$-adic integers. Similarly for any $n \geq 1$, we also have 
 \[ \rho_{E,\ell^n}: G_{k} \to \GL_2(\Z/{\ell^n}\Z),
\]
which describes the action of $G_k$ on the $\ell^n$-torsion subgroup $E[\ell^n]$ of $E(\bar{k})$. We note that $ \rho_{E,\ell^n}=\pi_n \circ  \rho_{E,\ell^{\infty}}$ where $\pi_n$ is the natural projection map $ \GL_2(\Z_{\ell}) \to  \GL_2(\Z/\ell^n \Z)$. 

We denote the image of $ \rho_{E,\ell^n}$ (resp., $ \rho_{E,\ell^{\infty}})$ as $G_{E,\ell^n}$ (resp., $G_{E,\ell^{\infty}}$).

  \subsection{Modular Curve $X_1(n)$}

  For a positive integer $n$, let
        \[
            \Gamma_1(n) \coloneqq
            \left\{{\left(\begin{smallmatrix}a & b \\ c & d \end{smallmatrix}\right)} \in \SL_2(\Z) :  c \equiv 0 \pmod{n}, \, a \equiv d \equiv 1 \pmod{n}\right\}. 
        \]
        The group $\Gamma_1(n)$ acts on the upper half plane $\mathbb{H}$ via linear fractional transformations, and the points of the Riemann surface $Y_1(n) \coloneqq \mathbb{H}/\Gamma_1(n)$ correspond to equivalence classes of pairs $[(E,P)]$, where $E$ is an elliptic curve over $\C$ and $P\in E$ is a point of order $n$. Here two pairs $(E,P)$ and $(E',P')$ 
     are equivalent if there exists an isomorphism $\varphi\colon E \rightarrow E'$ such that $\varphi(P)=P'$. By adjoining a finite number of cusps to $Y_1(n)$, we obtain the smooth projective curve $X_1(n)$. In fact, we may view $X_1(n)$ as an algebraic curve defined over $\Q$. 

 \begin{lemma}\cite[Lemma 2.1]{BELOV}\label{lem:degree}
            Let $E$ be a non-CM elliptic curve defined over the number field $k=\Q(j(E))$, let $P\in E$ be a point of order $n$, and let $x = [(E,P)]\in X_1(n)$. Then
            \[
                \deg(x)=c_x[k(P):k],
            \]
            where $c_x=1/2$ if {$2P \neq O$ and} there exists $\sigma \in \Gal_k$ such that {$\sigma(P)=-P$} and $c_x=1$ otherwise.
    \end{lemma}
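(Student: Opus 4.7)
The plan is to compute $\deg(x)$ by orbit-stabilizer: identify $x$ with a $G_k$-orbit of a geometric point above it, and use the non-CM hypothesis to pin down the stabilizer in terms of $P$ and $-P$.

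First I would set up the moduli-theoretic Galois action. Since $j(E)\in k$, the curve $E$ admits a Weierstrass model over $k$, so for every $\sigma\in G_k$ one has $E^\sigma=E$, and $\sigma$ acts on geometric points of $X_1(n)$ by sending the moduli class $[(E,P)]$ to $[(E,\sigma(P))]$. Viewing $X_1(n)$ base-changed to $k$, the closed point $x$ corresponds to a single $G_k$-orbit on $X_1(n)(\bar k)$, and the cardinality of that orbit equals $\deg(x)=[\kk(x):k]$.

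Second, I would invoke the non-CM hypothesis: since $\Aut_{\bar k}(E)=\{\pm 1\}$ for non-CM $E$, the moduli equivalence reduces to $[(E,P_1)]=[(E,P_2)]$ iff $P_2=\pm P_1$. Consequently, the stabilizer of $[(E,P)]$ in $G_k$ is
\[
H=\{\sigma\in G_k:\sigma(P)\in\{P,-P\}\},
\]
which contains the pointwise stabilizer $G_{k(P)}$ of $P$ as a subgroup of index $1$ or $2$ in $H$, while $[G_k:G_{k(P)}]=[k(P):k]$.

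Third, I would split into cases. If $2P=O$, then $\{P,-P\}=\{P\}$, so $H=G_{k(P)}$ and orbit-stabilizer gives $\deg(x)=[k(P):k]$, i.e.\ $c_x=1$. If $2P\neq O$ and some $\sigma\in G_k$ satisfies $\sigma(P)=-P$, then $[H:G_{k(P)}]=2$, yielding $\deg(x)=[k(P):k]/2$, i.e.\ $c_x=1/2$. If $2P\neq O$ but no such $\sigma$ exists, then $H=G_{k(P)}$ again and $c_x=1$. This exhausts the cases and matches the formula. The only place requiring real care is step one: confirming that the moduli-theoretic Galois action agrees with the scheme-theoretic action on $X_1(n)(\bar k)$ and that the $G_k$-orbit really computes the residue-field degree over $k$; this is standard from the coarse moduli description of $X_1(n)$, but it is where the bookkeeping must be made explicit so the orbit count is unambiguously $[\kk(x):k]$ rather than, say, $[\kk(x):\Q]$.
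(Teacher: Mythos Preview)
The paper does not give its own proof of this lemma; it is simply quoted from \cite[Lemma~2.1]{BELOV} without argument. Your proposal is correct and is the standard orbit--stabilizer argument one would expect: identify the closed point with the $G_k$-orbit of the geometric point $[(E,P)]$, use $\Aut_{\bar k}(E)=\{\pm1\}$ from the non-CM hypothesis to reduce the moduli identification to $P\mapsto\pm P$, and compare the stabilizer $H=\{\sigma:\sigma(P)\in\{P,-P\}\}$ with $G_{k(P)}$. Your case split and your flag about interpreting $\deg(x)$ as $[\kk(x):k]$ (after base change to $k=\Q(j(E))$) are both on point; since the paper provides no proof, there is nothing further to compare.
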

   
    \begin{proposition}\cite[Proposition 2.2]{BELOV}\label{prop:degree}
                For positive integers $n \geq m$ and a prime $\ell$, there is a natural $\Q$-rational map $\pi \colon X_1(\ell^n) \rightarrow X_1(\ell^m)$ with
                \[
                    \deg(\pi)=\ell^{2(n-m)}                \]
          \end{proposition}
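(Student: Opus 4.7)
The plan is to construct $\pi$ using the moduli interpretation of $X_1(\ell^n)$ and compute the degree via a generic fiber count, cross-checking against the index of the corresponding congruence subgroups.

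First, I would define $\pi$ on the open modular curves by sending the isomorphism class of a pair $(E,P)$ with $P$ of exact order $\ell^n$ to the class of $(E,\ell^{n-m}P)$; because $P$ has order $\ell^n$, the point $\ell^{n-m}P$ has exact order $\ell^m$, so the target class lies in $Y_1(\ell^m)$. This assignment is natural in the base scheme, so it descends from the moduli stack to a morphism of coarse moduli schemes $Y_1(\ell^n)\to Y_1(\ell^m)$. Since both curves are defined over $\Q$ and the assignment is $G_\Q$-equivariant on $\C$-points, the morphism is $\Q$-rational; it extends uniquely to a morphism of the smooth projective completions $X_1(\ell^n)\to X_1(\ell^m)$ by the valuative criterion.

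For the degree, I would compute a generic fiber. Fix $[(E,Q)]\in Y_1(\ell^m)(\C)$ with $E$ a generic non-CM elliptic curve, so $\Aut(E)=\{\pm 1\}$. A preimage is an isomorphism class $[(E,P)]$ with $\ell^{n-m}P\in\{Q,-Q\}$, subject to the identification $(E,P)\sim(E,-P)$. The preimages of $\pm Q$ under multiplication by $\ell^{n-m}$ on $E$ form two cosets of $E[\ell^{n-m}]$, each of size $\ell^{2(n-m)}$; for $\ell$ odd one has $Q\neq -Q$, so the cosets are disjoint. The involution $P\mapsto -P$ acts freely on their union (any fixed point would lie in $E[2]$ and force $Q=0$), so dividing by $2$ yields $\ell^{2(n-m)}$ orbits. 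Equivalently, the degree equals $[\Gamma_1(\ell^m):\Gamma_1(\ell^n)]$, which from the standard formula $[\SL_2(\Z):\Gamma_1(N)]=N^2\prod_{p\mid N}(1-p^{-2})$ collapses to $\ell^{2(n-m)}$.

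The main obstacle is a bookkeeping point rather than a conceptual one: in the edge case $\ell=2,\ m=1$, we have $-I\in\Gamma_1(2)$ but $-I\notin\Gamma_1(2^n)$ for $n\geq 2$, which introduces a factor-of-two correction to the naive index computation, and correspondingly makes the coset/involution count on the moduli side collapse differently since $Q=-Q$ when $Q\in E[2]$. Since the paper only invokes the proposition for $\ell>7$, this subtlety does not affect the sequel; all other cases are covered cleanly by either the moduli count or the congruence-subgroup index.
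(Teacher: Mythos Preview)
The paper does not supply its own proof of this proposition; it is quoted from \cite[Proposition~2.2]{BELOV} and used as a black box, so there is nothing in the paper to compare against. Your argument is the standard one and is correct for odd primes $\ell$: the moduli-theoretic fiber count and the congruence-subgroup index computation both give $\ell^{2(n-m)}$. You are also right to flag the edge case $\ell=2$, $m=1$, where the stated formula is off by a factor of $2$ (the degree of $X_1(2^n)\to X_1(2)$ is $2^{2n-3}$ for $n\ge 2$, since $-I\in\Gamma_1(2)$ but $-I\notin\Gamma_1(2^n)$), and right that this is irrelevant here because the paper only invokes the proposition for $\ell>7$.
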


\subsection{Isolated Points}

 Let $C/k$ be a curve with a point $P \in C(k)$. For $d\in \NN$, let $C^d$ denote the direct product of $d$ copies of $C$. We denote the $d$-th symmetric product of $C$ by $C^{(d)}$, i.e., the quotient of $C^d$ by the symmetric group $S_d$.  We have a natural map
 $\phi_d \colon C^{(d)} \to \Jac(C)$ given by $(P_1,\ldots, P_d) \mapsto [P_1+\ldots P_d-dP]$. We say that a point $x$ on $C$ is isolated \cite[Definition 4.1]{BELOV} if 
 \begin{itemize} 
 \item($\PP^1$-isolated) there is no $x' \in C^{(d)}$ such that $\phi_d(x)=\phi_d(x')$ and
 \item (AV-isolated)  there is no positive rank subabelian variety $A$ of $\Jac(C)(k)$ such that $\phi_d(x) + A \subset \im(\phi_d)$.
 \end{itemize}
 The first condition is due to the fact that if such a point exists, then there has to be a rational map $f \colon C \to \PP^1$ of degree $d$ such that $x$ is in $f^{-1}(\PP^1(k))$. If this is the case, we say $x$ is a member of a family  parametrized by $\PP^1$. Similarly, if there is such an abelian variety, we say $x$ is parametrized by a positive rank subabelian variety of $\Jac(C)$.
We note here that if $\Jac(C)(\Q)$ is of rank zero and the degree of a point $x$ on $C$ is less than the gonality, then $x$ is isolated.
   
    \begin{lemma}\label{lem:RiemannRoch}
    Let $C/k$ be a curve of genus $g>0$. Let $x$ be a point on $C$ of degree $d$. If $x$ is an isolated point, then $d \leq g$.
     \end{lemma}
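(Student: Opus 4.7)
The plan is to apply Riemann--Roch to the degree-$d$ effective divisor associated with $x$ and show that if $d>g$, then the fiber of $\phi_d$ through $x$ has positive dimension, directly contradicting the $\PP^1$-isolated property.

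First I would identify the closed point $x$ with a $k$-rational point of $C^{(d)}$: its $\Gal(\bar{k}/k)$-orbit of $d$ geometric points forms an effective divisor $D$ of degree $d$ defined over $k$, which is exactly a point of $C^{(d)}(k)$. Under $\phi_d \colon C^{(d)} \to \Jac(C)$, the scheme-theoretic fiber through $D$ is the complete linear system $|D| \cong \PP^{\ell(D)-1}_k$.

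Next, suppose for contradiction that $d > g$. Riemann--Roch then gives
\[
\ell(D) \;\geq\; \deg(D) - g + 1 \;=\; d - g + 1 \;\geq\; 2,
\]
so the fiber $\phi_d^{-1}(\phi_d(x))$ has dimension at least one. In particular it contains a $\PP^1_k$ through $x$, producing points $x' \in C^{(d)}(k)$ with $x' \neq x$ and $\phi_d(x') = \phi_d(x)$. This violates the $\PP^1$-isolated condition, so $d \leq g$.

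There is no real obstacle here: the argument is a direct application of Riemann--Roch once one translates ``$\PP^1$-isolated'' into the statement that the fiber of $\phi_d$ over $\phi_d(x)$ equals $\{x\}$, equivalently $\ell(D)=1$. The minor bookkeeping point is that a $k$-rational effective divisor of degree $d$ is exactly a $k$-rational point of $C^{(d)}$, which justifies applying Riemann--Roch to $D$ and reading off the fiber as a linear system defined over $k$.
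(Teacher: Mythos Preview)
Your proof is correct and follows essentially the same approach as the paper: both apply Riemann--Roch to the divisor $D$ associated with $x$ to get $\ell(D)\geq 2$ when $d>g$, and then conclude that $x$ is not $\PP^1$-isolated. The only cosmetic difference is that the paper extracts a non-constant function $f\in L(D)$ and argues its pole divisor must be exactly $D$ (since the $x_i$ form a single Galois orbit), whereas you phrase the same content as the fiber $\phi_d^{-1}(\phi_d(x))\cong |D|$ being positive-dimensional.
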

     \begin{proof}
     Assume that $d >g$. Let $x$ be a point of degree $d$ on $C$. Then $D=\sum_i{x_i}$, where $x_i$ are Galois conjugates of $x$, is a degree $d$ divisor. By the Riemann-Roch theorem, $\ell(D) \geq d-g+1 \geq 2$ and hence there is a non-constant function $f:C \to \PP^1$ defined over $k$ whose poles are at most at $x_i$'s. Since $f$ is defined over $k$, if $x_j$ is a pole of $f$, then $x_i$ is a pole of $f$ for all $i$. We deduce that $f$ has degree $d$ which implies that $x$ is not $\PP^1$-isolated, hence it is not isolated. \end{proof}
    
    \begin{theorem}\cite[Theorem 4.3]{BELOV}\label{levellowering}
     Let $f : C \to D$ be a finite map of
curves and let $x\in C$ be an isolated point. If $\deg(x) = \deg(f(x)) \dot \deg(f)$, then $f(x)$ is an isolated
point of $D$.
\end{theorem}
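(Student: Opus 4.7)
The plan is to prove the contrapositive. Set $d=\deg(x)$ and $d'=\deg(f(x))$, so the hypothesis reads $d=d'\cdot\deg(f)$. The central observation is that this degree equality forces $f^{*}(f(x))=x$ as effective divisors on $C$: since $\deg(f^{*}(f(x)))=\deg(f)\cdot d'=d=\deg(x)$ while $x$ occurs in $f^{*}(f(x))$ with ramification multiplicity $e_{x/f(x)}\geq 1$, equality of degrees forces both $e_{x/f(x)}=1$ and $x$ to be the unique preimage of $f(x)$ under $f$. With this in hand I treat the two failure modes of isolation on $D$ separately.

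Suppose first that $f(x)$ is not $\PP^{1}$-isolated, so there is $y'\in D^{(d')}$ with $y'\neq f(x)$ and $\phi_{d'}(y')=\phi_{d'}(f(x))$; equivalently, $y'\sim f(x)$ as divisors on $D$. The pullback $f^{*}$ preserves linear equivalence and sends effective divisors to effective divisors, so $f^{*}y'\in C^{(d)}$ satisfies $f^{*}y'\sim f^{*}f(x)=x$, and hence $\phi_d(f^{*}y')=\phi_d(x)$. If $f^{*}y'=x$ then pushing forward by $f_{*}$ yields $(\deg f)\cdot y'=(\deg f)\cdot f(x)$, forcing $y'=f(x)$; so $f^{*}y'\neq x$, witnessing that $x$ is not $\PP^{1}$-isolated.

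Suppose instead that $f(x)$ is not AV-isolated, so there is a positive-rank abelian subvariety $A'\subset\Jac(D)$ defined over $k$ with $\phi_{d'}(f(x))+A'\subset\im(\phi_{d'})$. The induced pullback $f^{*}\colon\Jac(D)\to\Jac(C)$ is a $k$-rational homomorphism with finite kernel, since the projection formula $f_{*}\circ f^{*}=[\deg f]$ on $\Jac(D)$ shows that $\ker(f^{*})$ is annihilated by $\deg(f)$. Therefore $A:=f^{*}(A')$ is a $k$-rational abelian subvariety of $\Jac(C)$ isogenous to $A'$ and of the same positive rank. The commutativity $\phi_d\circ f^{*}=f^{*}\circ\phi_{d'}$ relating pullback on symmetric products with pullback on Jacobians then gives
\[
\phi_d(x)+A=f^{*}\bigl(\phi_{d'}(f(x))+A'\bigr)\subset f^{*}(\im(\phi_{d'}))\subset \im(\phi_d),
\]
so $x$ is not AV-isolated either.

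The real content sits in two places: extracting the clean divisor-theoretic identity $f^{*}(f(x))=x$ from the arithmetic equality of degrees, and verifying that $f^{*}$ takes positive-rank $k$-subabelian varieties to positive-rank $k$-subabelian varieties. Neither is a genuine obstacle; everything else is formal bookkeeping with the pullback--pushforward formalism on divisors, symmetric products, and Jacobians.
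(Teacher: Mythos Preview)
The paper does not supply its own proof of this statement; it is quoted as \cite[Theorem 4.3]{BELOV} and used as a black box, so there is nothing in the present paper to compare against.

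Your contrapositive argument via pullback is the standard one and is correct. The key identity $f^{*}(f(x))=x$ follows exactly as you say from the degree hypothesis, and the two cases (failure of $\PP^{1}$-isolation, failure of AV-isolation) are handled cleanly by pulling back along $f^{*}$ on divisors and on Jacobians. One small point you elided: the equality $\phi_d\circ f^{*}=f^{*}\circ\phi_{d'}$ need only hold up to translation by a fixed element of $\Jac(C)(k)$, depending on the base points chosen to define $\phi_d$ and $\phi_{d'}$. This does not affect the argument, since the condition ``$\phi_d(x)+A\subset\im(\phi_d)$'' is really a statement about linear equivalence classes of effective divisors and is insensitive to such a translation; alternatively, one can simply phrase everything as ``for every $a'\in A'(k)$ there is an effective $w\in D^{(d')}$ with $w\sim f(x)+a'$, hence $f^{*}w\sim x+f^{*}a'$,'' which is what your pushforward/pullback bookkeeping actually establishes.
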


\begin{remark} \label{rem:level} 
Let $\pi: X_1(\ell^n) \to X_1(\ell^m)$ for integers $n> m$. Let $x:=[(E,P)]$ be a point on $X_1(\ell^n)$. If $G_{E,\ell^{n}}=\pi^{-1}(G_{E,\ell^{m}})$, then the assumption of Theorem \ref{levellowering} holds by \cite[Corollary 5.3]{BELOV}. This holds in particular when $\ell >3$ and $\rho_\ell$ is surjective.
\end{remark}

We call a point  $j\in X_1(1)\simeq \PP^1$ an isolated $j$-invariant if it is the image of an isolated point on $X_1(n)$, for some positive integer $n$.

\subsection{Some Subgroups of $\GL_2(\Z/\ell\Z)$} 
The nonsplit Cartan subgroup of $\GL_2(\Z/\ell \Z)$ is the subgroup
\[   C_{ns}(\ell)=\left\{ 
	\begin{bmatrix} 
	               a &\epsilon b \\
	               b & a\end{bmatrix}:a,b \in  \Z/\ell\Z,   (a,b)  \not\equiv (0,0)  \pmod \ell 
	               \right\}
 \]
where $\epsilon$ is a non-quadratic residue modulo $\ell$. We denote the normalizer of $C_{ns}(\ell)$ by $C_{ns}^+(\ell)$ respectively. The group $C_{ns}(\ell)$ has order $\ell^2-1$ and $C_{ns}^+(\ell)$ has order $2(\ell^2-1)$.

Let $E$ be an elliptic curve defined over $\Q$ and let $\ell \geq 5$ be a prime. Let $K$ be an
extension of $\Q_\ell$, of the least possible degree such that $E/K$ has good or multiplicative
reduction. Let $e$ be the ramification index of $K/\Q_\ell$.  Let $D$ denote the semi-Cartan subgroup of $\GL_2(\Z/\ell \Z)$ given by
 \[D=\left\{ 
	\begin{bmatrix} 
	               a &0 \\
	               0& 1\end{bmatrix}: a \in {\Z/\ell \Z}^* 
	              \right \}.
 \]
Here $e$ is $1,2,3,4$ or $6$. Let $f=\gcd(\ell-1,e)$, then $f <5$ or $f=6$.
\begin{theorem}\cite{Serre}\label{Serre-results}
 If $E/K$ has potential good ordinary or multiplicative reduction at $p$, then $G_{E,\ell}$ contains a subgroup that is conjugate to $D^f$.
\end{theorem}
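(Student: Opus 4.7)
The plan is to extract the desired subgroup from the image of the inertia group at $\ell$. Fix an embedding $\Qbar \hookrightarrow \overline{\Q_\ell}$ so that $G_K \subseteq G_{\Q_\ell} \subseteq G_\Q$; then $\rho_{E,\ell}(I_K) \subseteq G_{E,\ell}$, and it suffices to produce a conjugate of $D^f$ already inside $\rho_{E,\ell}(I_K)$. Let $\chi_\ell\colon G_\Q\to\F_\ell^*$ denote the mod-$\ell$ cyclotomic character, so $\det\rho_{E,\ell}=\chi_\ell$ by the Weil pairing.

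The first step is to record the local form of $\rho_{E,\ell}|_{G_K}$. When $E/K$ has good ordinary reduction at $\ell$, Serre--Tate theory provides a basis of $E[\ell]$ in which $\rho_{E,\ell}|_{G_K}$ is upper triangular with diagonal characters $(\alpha,\beta)$, where $\beta$ is unramified (from the action on the \'etale quotient of $E[\ell]$) and $\alpha|_{I_K}=\chi_\ell|_{I_K}$ (forced by $\alpha\beta=\chi_\ell$ and the fact that $\beta$ is trivial on inertia). When $E/K$ has multiplicative reduction, the Tate curve uniformization---possibly after a quadratic twist to reach split multiplicative reduction---yields the same upper-triangular shape with diagonal $(\chi_\ell,1)$. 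In either case, after restriction to $I_K$ and a conjugation absorbing the twist, the image $H:=\rho_{E,\ell}(I_K)$ lies in a Borel subgroup of $\GL_2(\F_\ell)$ with diagonal $(\chi_\ell|_{I_K},1)$.

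Next I would identify $\chi_\ell(I_K)$. The mod-$\ell$ cyclotomic character factors through the tame quotient of $I_{\Q_\ell}$ and is surjective onto $\F_\ell^*$. Because $K/\Q_\ell$ has ramification index $e$, the subgroup $I_K$ corresponds to an index-$e$ subgroup of tame inertia, so $\chi_\ell(I_K)$ equals the subgroup of $e$-th powers of $\F_\ell^*$---equivalently, the unique cyclic subgroup of order $(\ell-1)/\gcd(\ell-1,e)=(\ell-1)/f$, which is the group of $f$-th powers. The kernel of the projection $H \twoheadrightarrow \chi_\ell(I_K)$ lies in the unipotent radical of the Borel, and hence has order $1$ or $\ell$, while the quotient has order $(\ell-1)/f$, coprime to $\ell$. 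By Schur--Zassenhaus there is a cyclic subgroup $C\subseteq H$ of order $(\ell-1)/f$ that splits the projection. A generator of $C$ has two distinct eigenvalues $(a,1)$ with $a$ a generator of the $f$-th powers in $\F_\ell^*$, so $C$ is semisimple and diagonalizable; since any two semisimple elements of $\GL_2(\F_\ell)$ with the same eigenvalues are conjugate, $C$ is conjugate to $\{\mathrm{diag}(a^f,1):a\in\F_\ell^*\}=D^f$, completing the proof.

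The main obstacle is Step 1 in the multiplicative case, especially when the quadratic twist required for split multiplicative reduction is itself ramified at $\ell$: the diagonal of $\rho_{E,\ell}|_{I_K}$ is then literally $(\chi_\ell\eta,\eta)$ for a nontrivial order-$2$ character $\eta$ of $I_K$, and one must verify---either by absorbing $\eta$ into a conjugation, by passing to the kernel of $\eta$ while controlling $e$, or by exploiting that $\eta$ scalar-multiplied onto the identity is central---that the clean form $(\chi_\ell|_{I_K},1)$ on inertia, and hence the conclusion $D^f \subseteq G_{E,\ell}$ up to conjugacy, is genuinely recovered.
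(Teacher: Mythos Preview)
The paper does not prove this statement at all; its entire proof is the single line ``See \cite[Theorem~3.1]{LR-TorsionFieldOfDefn}.'' Your argument is a correct reconstruction of the classical Serre argument that the cited reference records: restrict to inertia at $\ell$, use the upper-triangular shape with diagonal $(\chi_\ell|_{I_K},1)$ in the ordinary/multiplicative cases, compute $\chi_\ell(I_K)=(\F_\ell^*)^f$ from the ramification index $e$, and split off a cyclic complement of order $(\ell-1)/f$ to get a conjugate of $D^f$. So your route and the paper's (cited) route coincide.

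One remark on your flagged obstacle: it is not actually an obstacle. By hypothesis $K$ is already chosen so that $E/K$ has semistable reduction; in the multiplicative case the quadratic character $\eta$ distinguishing split from non-split multiplicative reduction is the character through which $G_K$ acts on the component group of the N\'eron special fiber, and this action is always unramified. Hence $\eta|_{I_K}=1$, the diagonal on $I_K$ is literally $(\chi_\ell|_{I_K},1)$, and no absorption or passage to $\ker\eta$ is needed.
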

\begin{proof} See \cite[Theorem~3.1]{LR-TorsionFieldOfDefn}. 
\end{proof}

Let $E/\Q$ be a non-CM elliptic curve. Then $G_{E,\ell}$ is either $\GL_2(\Z/\ell\Z)$ or it is contained in one of the maximal subgroups of $\GL_2(\Z/\ell \Z)$: the normalizer of Cartan subgroups, Borel subgroups and the exceptional subgroups. Mazur \cite{Mazur} showed that if it is contained in a Borel subgroup, then $\ell$ is in $ \{2,3,5,7,11,17,37\}$. Moreover, if $\ell$ is $17$ or $37$, then $j(E)$ is in 
  \[ \{ -{17}\cdot{{373}^3}/{2^{17}}, {-17^2\cdot101^3}/2, -7\cdot11^3, -7\cdot137^3\cdot 2083^3 \}.
  \]
 See \cite{Zywina-possibleimages}.
In the case of the normalizer of a split Cartan subgroup, by \cite{BP11} and \cite{BPR13}, we know that $\ell \leq 7$ or $\ell=13$. Recent progress on finding rational points on curves \cite{BDMTV} showed that $\ell$ cannot be $13$. Similarly, Serre himself showed that if the group $G_{E,\ell}$ is contained in an exceptional subgroup of $\GL_2(\Z/\ell \Z)$,  then $\ell$ must be less than or equal to $13$. Hence if $\ell \geq 17$ and $\rho_{E,\ell}$ is not surjective, then it is either contained in the normalizer of a nonsplit Cartan subgroup of $\GL_2(\Z/\ell \Z)$ or the $j$-invariant is in the list $\{ -{17}\cdot{{373}^3}/{2^{17}}, {-17^2\cdot101^3}/2, -7\cdot11^3, -7\cdot137^3\cdot 2083^3 \}$. 

In the Borel case, we know that $G_{E,\ell^{\infty}}$ is as large as possible given the group $G_{E,\ell}$ for $\ell \leq 7$. Although Greenberg proves a similar result also for $\ell=5$, we only need to use the case $\ell>5$ in this article.

\begin{theorem}\cite{Greenberg-isogeny}, \cite{greenbergRSS:7isogeny}\label{Greenberg-isogeny}
Assume that $\ell > 5$. Assume that $E/\Q$ is a non-CM curve with an $\ell$-isogeny. Then the image of $\rho_{E,\ell^{\infty}}$ contains a Sylow pro-$\ell$ subgroup of $\GL_2(\Z_\ell)$.
\end{theorem}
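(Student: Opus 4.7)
The plan is to exploit the Borel structure of the $\ell$-adic image forced by the $\ell$-isogeny, and to analyze the diagonal and upper-unipotent parts of the image separately. Since $E$ has an $\ell$-isogeny over $\Q$, the representation $\rhobar_{E,\ell}$ is reducible, so after conjugation $G_{E,\ell^\infty} \subseteq B(\Z_\ell)$, the Borel of upper triangular matrices in $\GL_2(\Z_\ell)$. With Galois action $\sigma \mapsto \left(\begin{smallmatrix} \lambda(\sigma) & c(\sigma) \\ 0 & \mu(\sigma) \end{smallmatrix}\right)$, where $\lambda,\mu \colon G_\Q \to \Z_\ell^\times$ are characters satisfying $\lambda\mu = \chi_\ell$ (the cyclotomic character) and $c$ is a cocycle valued in $\Z_\ell(\lambda\mu^{-1})$, a Sylow pro-$\ell$ subgroup of $\GL_2(\Z_\ell)$ is conjugate to the preimage, under $\GL_2(\Z_\ell) \to \GL_2(\F_\ell)$, of the upper-unitriangular mod-$\ell$ matrices. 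The theorem thus reduces to two statements: (a) the diagonal image contains all of $(1+\ell\Z_\ell)^2 \cap \{\lambda\mu = \chi_\ell\}$, and (b) the upper-right cocycle $c$ surjects onto $\Z_\ell$.

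For (a), observe that $\det \rho_{E,\ell^\infty} = \chi_\ell$ already surjects $G_\Q \twoheadrightarrow \Z_\ell^\times$, so it suffices to show that at least one of $\lambda,\mu$ separately surjects onto $1+\ell\Z_\ell$. This follows from the local behavior of the isogeny character at $\ell$: the hypothesis $\ell > 5$ together with the existence of an $\ell$-isogeny rules out the genuinely supersingular case, and the information on the inertial action at $\ell$ provided by results of Serre's type (in the spirit of Theorem~\ref{Serre-results}) shows that at least one of $\lambda|_{I_\ell}$, $\mu|_{I_\ell}$ contains the wild part of $\chi_\ell|_{I_\ell}$.

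The essential step is (b), the cocycle analysis, and this is the main obstacle. I would analyze the extension class $[c] \in H^1(G_\Q,\Z_\ell(\lambda\mu^{-1}))$ locally at every prime. At a prime $p \ne \ell$ of multiplicative reduction, Tate's uniformization describes the inertial action as $\left(\begin{smallmatrix}1 & \ast \\ 0 & 1\end{smallmatrix}\right)$ with $\ast$ essentially $\log(q_E)$ in the Tate parameter, which forces the image of $c|_{I_p}$ to generate $\Z_\ell$. When $E$ has no such prime, analogous but more delicate information must be extracted from the decomposition group at $\ell$ using the theory of the canonical subgroup and the formal group of $E$. Kummer theory is then used to identify $[c]$ with an $\ell$-adic logarithm of an explicit global element depending on the isogeny, and the $\ell$-isogeny hypothesis is what forces this element to be an $\ell$-adic unit.

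The reason separate treatments are needed for $\ell=7$ and $\ell\geq 11$ is that when $X_0(\ell)$ has positive genus (as for the non-CM $\ell$-isogeny primes $\ell\geq 11$), the finite list of possible $j$-invariants, combined with Zywina's classification of mod-$\ell$ images, essentially reduces the cocycle analysis to a finite case-check, carried out in \cite{Greenberg-isogeny}. For $\ell=7$, the modular curve $X_0(7)$ has genus $0$, so there is a one-parameter family of non-CM curves with a $7$-isogeny, and the cocycle bound must be established uniformly across the family; this requires the separate and more intricate analysis of \cite{greenbergRSS:7isogeny}.
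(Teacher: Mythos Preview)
First, note that the paper does not supply its own proof of this statement: the result is quoted from Greenberg, and the paper simply points to \cite[Theorem~3.1]{LR-TorsionFieldOfDefn}.

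Your sketch, however, has a fatal error at the outset. From the rational $\ell$-isogeny you correctly deduce that $\rhobar_{E,\ell}$ is reducible, so $G_{E,\ell}$ sits in a Borel of $\GL_2(\F_\ell)$. But you then assert that, after conjugation, $G_{E,\ell^\infty}\subseteq B(\Z_\ell)$, the upper-triangular subgroup of $\GL_2(\Z_\ell)$. That inference is false: containment in $B(\Z_\ell)$ would require a Galois-stable $\Z_\ell$-line in the Tate module (equivalently a rational cyclic $\ell^n$-isogeny for every $n$), which is impossible for a non-CM curve over $\Q$ since $G_{E,\ell^\infty}$ is open in $\GL_2(\Z_\ell)$ by Serre while $B(\Z_\ell)$ has infinite index. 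In fact the conclusion you are trying to prove is \emph{incompatible} with your setup: the Sylow pro-$\ell$ subgroup of $\GL_2(\Z_\ell)$, namely the preimage of the mod-$\ell$ upper unitriangulars, contains matrices such as $\left(\begin{smallmatrix}1&0\\ \ell&1\end{smallmatrix}\right)$, which are not upper triangular. Your conditions (a) and (b) only constrain diagonal and upper-right entries and can therefore at best produce a Sylow pro-$\ell$ subgroup of $B(\Z_\ell)$, a strictly smaller pro-$\ell$ group than the one in the statement. The correct ambient group for the argument is the preimage in $\GL_2(\Z_\ell)$ of the mod-$\ell$ Borel, and the substance of Greenberg's proof is to show that $G_{E,\ell^\infty}$ contains the full principal congruence subgroup $\ker\bigl(\GL_2(\Z_\ell)\to\GL_2(\F_\ell)\bigr)$ together with an order-$\ell$ unipotent in $\GL_2(\F_\ell)$; the lower-left congruence entries are exactly what your upper-triangular framework cannot see.
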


\section{Classifying isolated Points on Prime Power Level}

Let $E$ be an elliptic curve over $\Q$. Then \cite[Proposition 2.2]{Lemos-borel} implies that if $\ell \geq 5$ and $G_{E,\ell}$ is contained in $C_{ns}^+(\ell)$, then $E$ has potential good reduction at $\ell$. We show that $E$, in fact, has potential good supersingular reduction at $\ell$ when $\ell >7$.

 \begin{proposition} \label{supersingular}
	Assume that $\ell>7$ and $\ell \neq 13$. Let $E$ be an elliptic curve defined over $\Q$. If $G_{E,\ell}$ is conjugate to a subgroup of  $C^{+}_{ns}(\ell)$, then $E$ has potential supersingular reduction at $\ell$.
	\end{proposition}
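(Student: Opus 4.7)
The plan is a proof by contradiction, exploiting the fact that elements sitting in (a conjugate of) $C_{ns}^+(\ell)$ have very constrained eigenvalue and trace data. Since \cite[Proposition~2.2]{Lemos-borel} already gives potential good reduction at $\ell$, the only case left to rule out is potential good \emph{ordinary} reduction. Assuming this, Theorem \ref{Serre-results} produces a subgroup of $G_{E,\ell}$ conjugate to $D^f$, where $f=\gcd(\ell-1,e)$ with $e\in\{1,2,3,4,6\}$, so $f\in\{1,2,3,4,6\}$. Combined with the hypothesis that (a conjugate of) $G_{E,\ell}$ is contained in $C_{ns}^+(\ell)$, some conjugate of $D^f$ in $\GL_2(\F_\ell)$ lands inside $C_{ns}^+(\ell)$.

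Next, I would verify that $|D^f|=(\ell-1)/f>2$ for every prime allowed by the hypothesis $\ell>7$, $\ell\neq 13$. For $\ell\geq 17$ one has $\ell-1\geq 16>12\geq 2f$, so $(\ell-1)/f\geq 3$. For $\ell=11$, a direct check gives $f\in\{1,2\}$, hence $(\ell-1)/f\geq 5$. Consequently $D^f$ is cyclic of order at least $3$, and therefore contains a matrix $\mathrm{diag}(c,1)$ with $c\in\F_\ell^*$ of multiplicative order $\geq 3$; in particular $c\notin\{\pm 1\}$.

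The crux of the argument is then a short eigenvalue/trace obstruction in $\GL_2(\F_\ell)$. Any non-scalar element of $C_{ns}(\ell)$ has eigenvalues lying in $\F_{\ell^2}\setminus\F_\ell$ (they are Galois conjugates over $\F_\ell$), while any element of $C_{ns}^+(\ell)\setminus C_{ns}(\ell)$ has trace $0$. Since conjugation preserves both traces and (unordered) eigenvalues, $\mathrm{diag}(c,1)$, which has distinct eigenvalues $c,1\in\F_\ell^*$ and trace $c+1\neq 0$, cannot be conjugate into $C_{ns}^+(\ell)$, giving the desired contradiction. The main obstacle is really the numerology of the second paragraph: the exclusion $\ell\neq 13$ is forced because at $\ell=13$ the choice $f=6$ gives $|D^f|=2$, so $D^f=\{\pm I\}$ embeds into $C_{ns}^+(13)$ and this line of reasoning genuinely breaks down.
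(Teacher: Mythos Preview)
Your proof is correct and follows essentially the same strategy as the paper's: both invoke Theorem~\ref{Serre-results} to place a conjugate of $D^f$ inside $C_{ns}^{+}(\ell)$ and then derive a contradiction from $|D^f|=(\ell-1)/f>2$ for $\ell>7$, $\ell\neq 13$. The only difference is tactical---the paper bounds $|H|\le 2$ via the index-$2$ quotient $C_{ns}^{+}(\ell)/C_{ns}(\ell)$ together with the cyclic structure of $C_{ns}(\ell)\cong\F_{\ell^2}^{*}$, whereas you pick a single element $\mathrm{diag}(c,1)$ with $c\neq\pm 1$ and exclude it directly by noting its $\F_\ell$-rational eigenvalues (so not a non-scalar element of $C_{ns}(\ell)$) and nonzero trace (so not in $C_{ns}^{+}(\ell)\setminus C_{ns}(\ell)$); these are equivalent observations.
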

	
	\begin{proof}
	Let $\ell >7$. Fixing a basis for $E[\ell]$, we may assume that $G_{E,\ell}$ is contained in  $C^{+}_{ns}(\ell)$. Assume for contradiction  that $E$ has potential good ordinary or multiplicative reduction at $\ell$. By Theorem \ref{Serre-results}, $G_{E,\ell}$ contains a subgroup $H$ that is conjugate to $D^f$, the $f$'th power of a semi-Cartan subgroup.	
		
 We first consider the composition of the inclusion map $H \into C_{ns}^{+}(\ell)$ and the quotient map $ C_{ns}^{+}(\ell) \to C_{ns}^{+}(\ell)/C_{ns}(\ell)$. We observe that the kernel of this composition is $H\cap C_{ns}(\ell)$ and hence we have an injective map
   \[ 
          H/H\cap C_{ns}(\ell) \into C_{ns}^{+}(\ell)/C_{ns}(\ell). 
    \]
 Since the order of $C_{ns}^{+}(\ell)/C_{ns}(\ell)$ is two, the index of the subgroup $H\cap C_{ns}(\ell)$ in $H$ is at most $2$. We also note that the order of $H$ (and also the order of $H\cap C_{ns}(\ell)$) divides $\ell-1$.
 The group $C_{ns}(\ell)$ is isomorphic to ${\F}_{\ell^2}^{*}$ by the map 
	\[ 
	\begin{bmatrix} 
	   a & b\epsilon \\
	   b & a
	\end{bmatrix} \mapsto a+\epsilon b,
	\]
where $\epsilon$ is a non-quadratic residue modulo $\ell$ and hence it is cyclic. The unique subgroup of $C_{ns}(\ell)$ of order equal to $|H\cap C_{ns}(\ell)|$ is isomorphic to a subgroup of ${\F}_{\ell}^{*}$, i.e., it is isomorphic to a subgroup of the group of diagonal matrices. 
	
A matrix in $D^f$ has two eigenvalues: $1$ and $a$. However a diagonal matrix has one eigenvalue with multiplicity two. Hence $H\cap C_{ns}(\ell)=\{ (1)\}$ and $H$ has at most two elements. For $\ell>13$, the order of $H$ which equals $(\ell-1)/f$ is strictly greater than $2$ since $f\leq 6$. This proves that $E$ has potential supersingular reduction at $\ell$ for $\ell>7$ and $\ell\neq 13$. 
\end{proof}

\begin{proposition}\label{prop:nonsplit}
Let $\ell >7$ and $\ell \neq 13$. Let $E$ be an elliptic curve defined over $\Q$ such that the image of $\rho_{E,\ell}$ is conjugate to a subgroup of $C_{ns}^+(\ell)$. If $R$ is a point of order $\ell^n$  on $E$, then the degree of $\Q(R)$ over $\Q([\ell]R)$ equals $\ell^2$.
\end{proposition}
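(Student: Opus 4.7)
Plan: First, I would reformulate the degree in terms of Galois orbits. The degree $d:=[\Q(R):\Q([\ell]R)]$ equals the size of the Galois orbit of $R$ over $\Q([\ell]R)$, i.e.\ the orbit of $R$ under the stabilizer $H:=\mathrm{Stab}_{G_{E,\ell^n}}([\ell]R)$ acting on the fiber $R+E[\ell]$ of the map $[\ell]\colon E[\ell^n]\to E[\ell^{n-1}]$. Since this fiber has size $\ell^2$, $d\mid \ell^2$; the task is to prove $d=\ell^2$. Let $N:=\ker\bigl(G_{E,\ell^n}\to G_{E,\ell^{n-1}}\bigr)$, which is normal in $G_{E,\ell^n}$ and contained in $H$. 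It suffices to show that $N$ alone acts transitively on $R+E[\ell]$.

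Next, I would identify the structure of $N$. For $n\geq 2$, each $\sigma\in N$ has the form $I+\ell^{n-1}A$ with $A\in \Mat_2(\F_\ell)$, and this correspondence is an additive isomorphism of $N$ with a subgroup $N_0\subseteq \Mat_2(\F_\ell)$. A direct computation gives $(I+\ell^{n-1}A)\cdot R=R+A\bar R$, where $\bar R:=[\ell^{n-1}]R\in E[\ell]\setminus\{O\}$. Hence transitivity of $N$ on $R+E[\ell]$ reduces to the linear statement
\[
N_0\cdot \bar R = E[\ell].
\]
A clean sufficient condition, which I would aim to establish, is that $N_0$ contains the image of the non-split Cartan subalgebra $\F_{\ell^2}\hookrightarrow \Mat_2(\F_\ell)$: under this embedding $E[\ell]\cong\F_{\ell^2}$ is a free rank-one $\F_{\ell^2}$-module, so $\F_{\ell^2}\cdot\bar R=E[\ell]$ for every nonzero $\bar R$, and we are done.

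The non-split Cartan subalgebra inside $N_0$ should come from local analysis at $\ell$. Proposition \ref{supersingular} gives that $E/\Q_\ell$ has potential good supersingular reduction; over a finite extension $K/\Q_\ell$, $E/K$ has good supersingular reduction and its associated formal group $\hat E/\calO_K$ has height $2$, with $E[\ell^n]=\hat E[\ell^n]$ inside $\overline{\Q}_\ell$. By the classical analysis of the Galois representation attached to a supersingular elliptic curve (fundamental characters of level $2$; cf.~\cite{Serre}), the image of the inertia group $I_K$ under $\rho_{E,\ell^\infty}$ is open in a non-split Cartan torus $\calO_F^\times\subset \GL_2(\Z_\ell)$, where $F/\Q_\ell$ is the unramified quadratic extension. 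Intersecting this image with the kernel of reduction modulo $\ell^{n-1}$ and looking at it modulo $\ell^n$ contributes a copy of $(1+\ell^{n-1}\calO_F)/(1+\ell^n\calO_F)\cong \F_{\ell^2}$ to $N_0$, embedded precisely as the desired non-split Cartan subalgebra of $\Mat_2(\F_\ell)$.

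The main obstacle is this last paragraph: controlling the depth of the inertia image. A priori the open subgroup of $\calO_F^\times$ cut out by $I_K$ might intersect $1+\ell^{n-1}\calO_F$ in an index-$\ell$ or index-$\ell^2$ subgroup, giving only a proper $\F_\ell$-subspace rather than the full $\F_{\ell^2}$. For $n$ large this issue disappears by openness, but for small $n$ (in particular $n=2$) one likely needs either a direct Newton-polygon computation for $[\ell](T)=c$ on the height-two formal group—showing all $\ell^2$ preimages share a common valuation and form a single Galois orbit—or a careful bookkeeping of the ramification filtration. An alternative strategy would bypass $N_0$ altogether and prove $[K(R):K([\ell]R)]=\ell^2$ locally, then deduce the global inequality by comparing local and global degrees at a prime of $\Q([\ell]R)$ above $\ell$.
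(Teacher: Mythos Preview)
Your main approach via the kernel $N$ and its ``linearization'' $N_0\subseteq\Mat_2(\F_\ell)$ is a valid reduction, but the gap you flag is real and is not easily circumvented: openness of the inertia image in $\calO_F^\times$ tells you nothing about whether $(1+\ell^{n-1}\calO_F)/(1+\ell^n\calO_F)$ is fully hit for a \emph{given} small $n$. To resolve this you would need precisely the Newton-polygon/ramification analysis on the height-$2$ formal group that you sketch at the end. That analysis has already been carried out in the literature: it is \cite[Theorem~1.2(2)]{Lozano-Robledo-supersing}, which shows directly that for $E/\Q$ with potential supersingular reduction at $\ell$ and $R$ of order $\ell^n$ (with $n\geq 2$), one has $\ell^2\mid[\Q(R):\Q([\ell]R)]$.

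The paper's proof is therefore just two lines: Proposition~\ref{supersingular} gives potential supersingular reduction at $\ell$; Lozano-Robledo's theorem then gives $\ell^2\mid[\Q(R):\Q([\ell]R)]$; and the trivial bound $[\Q(R):\Q([\ell]R)]\leq \ell^2$ (since $R$ lies in the $\ell^2$-element fiber $R+E[\ell]$) finishes. Your ``alternative strategy'' of proving the degree locally and transporting it globally is exactly the content of that citation, so rather than redevelop the $N_0$ machinery you should invoke \cite{Lozano-Robledo-supersing} directly.
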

\begin{proof}
By Proposition~\ref{supersingular}, the elliptic curve $E$ has potential supersingular reduction at $\ell$. Let  $R$ be a point of exact order $\ell^n$ on $E$. By \cite[Theorem~1.2(2)]{Lozano-Robledo-supersing}  the degree of the extension $\Q(R)$ over $\Q([\ell]R)$ is divisible by $\ell^2$.  Since the degree $[\Q(R):\Q([\ell]R)]$ can be at most $\ell^2$, we are done.
\end{proof}
\begin{lemma}\label{lem:nonsplit}
Let $\ell>7$ and $\ell \neq 13$. Let $x=[(E,P)]$ be a point on $X_1(\ell^n)$ such that $G_{E,\ell}$ is conjugate to a subgroup of  $C_{ns}^+(\ell)$. Then $ \deg(x) = \deg(\pi(x)) \dot \deg(\pi)$
where $\pi : X_1(\ell^n)\to X_1(\ell^m)$ for any $n>m$.

\end{lemma}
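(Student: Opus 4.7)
The plan is to reduce the equality $\deg(x) = \deg(\pi(x))\cdot\deg(\pi)$ to two ingredients: Proposition~\ref{prop:nonsplit}, iterated along the $\ell$-power torsion tower, together with the elementary fact that for a finite morphism of curves the residue-field extension at a closed point has degree at most the degree of the map.

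First I would set $P' := [\ell^{n-m}]P$, a point of exact order $\ell^m$. Applying Proposition~\ref{prop:nonsplit} inductively along the chain $P, [\ell]P, \ldots, [\ell^{n-m}]P = P'$ shows that $[\Q(P):\Q(P')] = \ell^{2(n-m)}$. By Lemma~\ref{lem:degree}, I would then write
\[
\deg(x) = c_x\,[\Q(P):\Q], \qquad \deg(\pi(x)) = c_{\pi(x)}\,[\Q(P'):\Q],
\]
with $c_x, c_{\pi(x)} \in \{1/2, 1\}$. Since $\ell$ is odd, neither $2P$ nor $2P'$ vanishes, so $c_x = 1/2$ iff there exists $\sigma \in G_\Q$ with $\sigma(P) = -P$, and analogously for $c_{\pi(x)}$. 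Combining the two formulas with the tower degree gives
\[
\deg(x) = \frac{c_x}{c_{\pi(x)}}\cdot \ell^{2(n-m)}\cdot \deg(\pi(x)),
\]
so the lemma reduces to the equality $c_x = c_{\pi(x)}$.

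The key step is to establish this equality. One direction is immediate: if $\sigma(P) = -P$ for some $\sigma \in G_\Q$, then $\sigma(P') = [\ell^{n-m}]\sigma(P) = -P'$, giving $c_x = 1/2 \Rightarrow c_{\pi(x)} = 1/2$. For the opposite direction, I would invoke the general inequality $[\kk(x):\kk(\pi(x))] \leq \deg(\pi)$, equivalently $\deg(x) \leq \deg(\pi)\cdot\deg(\pi(x))$, which holds for any finite morphism of curves. Substituting the expressions above, this inequality forces $c_x \leq c_{\pi(x)}$, and combined with the previous implication we conclude $c_x = c_{\pi(x)}$, whence the lemma follows.

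I do not expect a serious obstacle here: the substantive input---supersingular reduction together with the $\ell^2$-degree jump from Lozano-Robledo's theorem---is entirely packaged into Proposition~\ref{prop:nonsplit}. What remains is a short symbolic manipulation with Lemma~\ref{lem:degree} plus a standard bound on residue-field degrees for finite morphisms.
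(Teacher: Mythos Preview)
Your proposal is correct and follows the same approach as the paper, which simply cites Lemma~\ref{lem:degree}, Proposition~\ref{prop:degree}, and Proposition~\ref{prop:nonsplit} without further comment. You have supplied the detail the paper omits---namely the verification that $c_x = c_{\pi(x)}$---and your use of the general bound $\deg(x)\leq \deg(\pi)\cdot\deg(\pi(x))$ to force $c_x \leq c_{\pi(x)}$ is a clean way to close that gap.
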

\begin{proof}
This follows from Lemma~\ref{lem:degree}, Proposition~\ref{prop:degree} and Proposition~\ref{prop:nonsplit}.
\end{proof}

\begin{remark}\label{nonsplit13}
 If $\ell=13$, then there are no non-CM elliptic curves with Galois representation contained in $C_{ns}^{+}(\ell)$ (\cite{BDMTV}). Hence the conclusion of Proposition~\ref{supersingular}, Proposition~\ref{prop:nonsplit} and Lemma~\ref{lem:nonsplit} holds for $\ell >7$ and for all non-CM elliptic curves defined over $\Q$.
\end{remark}

   \subsection{Proof of Theorem \ref{thm:primepower} }

Let $x=[(E,P)]$ be a non-CM, non-cuspidal isolated point on $X_1(\ell^n)$ with a rational $j$-invariant. We may assume that $E$ is defined over $\Q$. For $\ell >7$ and $\ell \neq 13$, $\rho_{E,\ell}$ is either surjective, contained in a Borel subgroup, or the normalizer of a non-split Cartan subgroup. When $\ell=13$, it can also be contained in an exceptional subgroup of $\GL_2(\Z/\ell\Z)$ by the classification given in \cite{Zywina-possibleimages}. We will first show that when $\ell>7$, $x$ induces an isolated point on $X_1(\ell)$. Then we will rule out the existence of an isolated point on $X_1(\ell)$.

Assume $\ell >7$. If $G_{E,\ell}$ is contained in a Borel subgroup, then by Theorem~\ref{Greenberg-isogeny}, the $\ell$-adic representation of $E$ is as large as possible given the mod $\ell$ representation. Using Remark \ref{rem:level} and Theorem \ref{levellowering} we conclude that if the image of $\rho_{E,\ell}$ is $\GL_2(\Z/\ell\Z)$ or it is contained in a Borel subgroup, then $x$ maps to an isolated point on $X_1(\ell)$.

We assume now that $G_{E,\ell}$ is contained in the normalizer of a non-split Cartan subgroup. In this case, we do not know that the $\ell$-adic representation is determined by the mod~$\ell$ image. However, by Lemma~\ref{lem:nonsplit} and Theorem~\ref{levellowering}, we are able to conclude that the image of $x$ on $X_1(\ell)$ is isolated.

 Assume that mod $\ell$ representation $G_{E,\ell}$ is exceptional. By the classification of the images of $\rho_{E,\ell}$ given in \cite{Zywina-possibleimages}, we may assume $\ell=13$. Moreover by the results of \cite{BDMTV}, we know the (finitely many) $j$-invariants giving rise to these points. Recent work  \cite{rouse2021elladic} of Rouse, Sutherland and Zureick-Brown shows that $\ell$-adic representation of $E$ in this case is as large as possible given the mod $\ell$ representation. By Remark \ref{rem:level}, $x$ induces an isolated point on $X_1(\ell)$.

We may now assume that $x$ is an isolated point on $X_1(\ell)$ with a rational $j$-invariant. The rest of the proof is similar to the proof of \cite[Proposition~8.4]{BELOV}. The genus of $X_1(\ell)$ is less than $(\ell^2-1)/24$ for prime $\ell$. If the image of $\rho_{E,\ell}$ is contained in the normalizer of a nonsplit Cartan subgroup, then the degree of $x$ is at least $(\ell^2-1)/12$ by \cite[Theorem 7.3 ]{LR-TorsionFieldOfDefn}. By Lemma~\ref{lem:RiemannRoch}, $x$ is not isolated. Assume $G_{E,\ell}$ is contained in a Borel subgroup. Then $\ell=11, 17$ or $37$. Assume $\ell=11$. Since $X_1(11)$ has genus one, $x$ cannot be isolated by Lemma~\ref{lem:RiemannRoch}. 
Assume $\ell=17$. Then the degree of $x$ is either $4$ or $8$. By \cite[Proposition~6]{DKM}, there are no $\PP^1$-isolated points of degree $4$. Since the Jacobian of $X_1(17)$ has only finitely many rational points, it follows that there are no isolated points of degree $4$ on $X_1(17)$. Since the genus is $5$, a degree $8$ point cannot be isolated by Lemma \ref{lem:RiemannRoch}. On the other hand, there are two rational $j$-invariants giving rise to an elliptic curve with a rational $37$-isogeny. They are given by $7\cdot11^3$ and $-7\cdot137^3\cdot2083^3$. The first one is isolated by (\cite[Proposition 8.4]{BELOV}). 

Let $\ell=13$. We have covered all cases except the exceptional subgroup. There are three such rational $j$-invariants. We compute using Magma that the degree of these points on $X_1(13)$ are greater than $3$, since the genus is $2$, we are done.
\qed

\subsection*{Acknowledgements}
The author is grateful to Abbey Bourdon, Filip Najman, Alvaro Lozano-Robledo and the anonymous referee for their comments on the earlier drafts of this paper. The author is supported by the project Marie Skłodowska-Curie actions and TUBITAK.


\begin{bibdiv}
\begin{biblist}

\bib{BDMTV}{article}{
      author={Balakrishnan, Jennifer},
      author={Dogra, Netan},
      author={M\"{u}ller, J.~Steffen},
      author={Tuitman, Jan},
      author={Vonk, Jan},
       title={Explicit {C}habauty-{K}im for the split {C}artan modular curve of
  level 13},
        date={2019},
        ISSN={0003-486X},
     journal={Ann. of Math. (2)},
      volume={189},
      number={3},
       pages={885\ndash 944},
         url={https://doi.org/10.4007/annals.2019.189.3.6},
      review={\MR{3961086}},
}

\bib{BELOV}{article}{
      author={Bourdon, Abbey},
      author={Ejder, \"{O}zlem},
      author={Liu, Yuan},
      author={Odumodu, Frances},
      author={Viray, Bianca},
       title={On the level of modular curves that give rise to isolated
  {$j$}-invariants},
        date={2019},
        ISSN={0001-8708},
     journal={Adv. Math.},
      volume={357},
       pages={106824, 33},
         url={https://doi.org/10.1016/j.aim.2019.106824},
      review={\MR{4016915}},
}

\bib{BRRWodd}{article}{
      author={{Bourdon}, Abbey},
      author={{Gill}, David~R.},
      author={{Rouse}, Jeremy},
      author={{Watson}, Lori~D.},
       title={{Odd degree isolated points on $X_1(N)$ with rational
  $j$-invariant}},
        date={2020-06},
     journal={arXiv e-prints},
       pages={arXiv:2006.14966},
      eprint={2006.14966},
}

\bib{BP11}{article}{
      author={Bilu, Yuri},
      author={Parent, Pierre},
       title={Serre's uniformity problem in the split {C}artan case},
        date={2011},
        ISSN={0003-486X},
     journal={Ann. of Math. (2)},
      volume={173},
      number={1},
       pages={569\ndash 584},
         url={https://doi.org/10.4007/annals.2011.173.1.13},
      review={\MR{2753610}},
}

\bib{BPR13}{article}{
      author={Bilu, Yuri},
      author={Parent, Pierre},
      author={Rebolledo, Marusia},
       title={Rational points on {$X^+_0(p^r)$}},
        date={2013},
        ISSN={0373-0956},
     journal={Ann. Inst. Fourier (Grenoble)},
      volume={63},
      number={3},
       pages={957\ndash 984},
         url={https://doi.org/10.5802/aif.2781},
      review={\MR{3137477}},
}

\bib{DKM}{article}{
      author={Derickx, Maarten},
      author={Mazur, Barry},
      author={Kamienny, Sheldon},
       title={Rational families of 17-torsion points of elliptic curves over
  number fields},
        date={2018},
     journal={Contemporary Mathematics},
       pages={81\ndash 104},
}

\bib{Elkies}{article}{
      author={{Elkies}, Noam~D.},
       title={{Elliptic curves with 3-adic Galois representation surjective mod
  3 but not mod 9}},
        date={2006-12},
     journal={arXiv Mathematics e-prints},
       pages={math/0612734},
      eprint={math/0612734},
}

\bib{Frey}{article}{
      author={Frey, Gerhard},
       title={Curves with infinitely many points of fixed degree},
        date={1994},
        ISSN={0021-2172},
     journal={Israel J. Math.},
      volume={85},
      number={1-3},
       pages={79\ndash 83},
         url={https://doi.org/10.1007/BF02758637},
      review={\MR{1264340}},
}

\bib{Greenberg-isogeny}{article}{
      author={Greenberg, Ralph},
       title={The image of {G}alois representations attached to elliptic curves
  with an isogeny},
        date={2012},
        ISSN={0002-9327},
     journal={Amer. J. Math.},
      volume={134},
      number={5},
       pages={1167\ndash 1196},
         url={https://doi.org/10.1353/ajm.2012.0040},
      review={\MR{2975233}},
}

\bib{greenbergRSS:7isogeny}{article}{
      author={Greenberg, R.},
       title={\href{https://doi.org/10.1353/ajm.2014.0005}{On elliptic curves
  with an isogeny of degree~$7$}},
        date={2014},
        ISSN={0002-9327},
     journal={Amer. J. Math.},
      volume={136},
      number={1},
       pages={77\ndash 109},
         url={https://doi.org/10.1353/ajm.2014.0005},
      review={\MR{3163354}},
}

\bib{Lemos-borel}{article}{
      author={Lemos, Pedro},
       title={Serre's uniformity conjecture for elliptic curves with rational
  cyclic isogenies},
        date={2019},
        ISSN={0002-9947},
     journal={Trans. Amer. Math. Soc.},
      volume={371},
      number={1},
       pages={137\ndash 146},
         url={https://doi.org/10.1090/tran/7198},
      review={\MR{3885140}},
}

\bib{LR-TorsionFieldOfDefn}{article}{
      author={Lozano-Robledo, \'Alvaro},
       title={On the field of definition of {$p$}-torsion points on elliptic
  curves over the rationals},
        date={2013},
        ISSN={0025-5831},
     journal={Math. Ann.},
      volume={357},
      number={1},
       pages={279\ndash 305},
         url={https://doi.org/10.1007/s00208-013-0906-5},
      review={\MR{3084348}},
}

\bib{Lozano-Robledo-supersing}{article}{
      author={Lozano-Robledo, \'{A}lvaro},
       title={Ramification in the division fields of elliptic curves with
  potential supersingular reduction},
        date={2016},
        ISSN={2363-9555},
     journal={Res. Number Theory},
      volume={2},
       pages={Art. 8, 25},
         url={https://doi.org/10.1007/s40993-016-0040-z},
      review={\MR{3501021}},
}

\bib{Mazur}{article}{
      author={Mazur, B.},
       title={Rational isogenies of prime degree (with an appendix by {D}.
  {G}oldfeld)},
        date={1978},
        ISSN={0020-9910},
     journal={Invent. Math.},
      volume={44},
      number={2},
       pages={129\ndash 162},
         url={http://dx.doi.org/10.1007/BF01390348},
      review={\MR{482230}},
}

\bib{rouse2021elladic}{article}{
      author={Rouse, Jeremy},
      author={Sutherland, Andrew~V.},
      author={Zureick-Brown, David},
       title={$\ell$-adic images of galois for elliptic curves over
  $\mathbb{Q}$},
       journal={preprint, {arXiv:2106.11141}},
}

\bib{Serre}{article}{
      author={Serre, Jean-Pierre},
       title={Propri\'{e}t\'{e}s galoisiennes des points d'ordre fini des
  courbes elliptiques},
        date={1972},
        ISSN={0020-9910},
     journal={Invent. Math.},
      volume={15},
      number={4},
       pages={259\ndash 331},
         url={https://doi.org/10.1007/BF01405086},
      review={\MR{387283}}, 
}

\bib{smith2020ramification}{misc}{
      author={Smith, Hanson},
       title={Ramification in division fields and sporadic points on modular
  curves},
        date={2020}, 
}

\bib{Sutherland-Computingimages}{article}{
      author={Sutherland, Andrew~V.},
       title={Computing images of {G}alois representations attached to elliptic
  curves},
        date={2016},
        ISSN={2050-5094},
     journal={Forum Math. Sigma},
      volume={4},
       pages={e4, 79},
         url={https://doi.org/10.1017/fms.2015.33},
      review={\MR{3482279}},
}

\bib{Zywina-possibleimages}{article}{
      author={{Zywina}, D.},
       title={On the possible images of the mod $\ell$ representations
  associated to elliptic curves over $q$},
     journal={preprint, {arXiv:1508.07660}},
}

\end{biblist}
\end{bibdiv}

\end{document}